\newtheorem{theorem}{Theorem}[section]
\newtheorem{proposition}[theorem]{Proposition}
\newtheorem{lemma}[theorem]{Lemma}
\theoremstyle{definition}
\newtheorem{definition}[theorem]{Definition}
\newtheorem{remark}[theorem]{Remark}
\theoremstyle{theorem}
\title[Contact 5-manifolds admitting open books with exotic pages]{Contact 5-manifolds admitting open books\\with exotic pages}
\author[Selman Akbulut]{Selman Akbulut${}^1$}
\address{Department of Mathematics, \newline\indent Michigan State 
University \newline\indent East Lansing, MI 48824}
\email{akbulut@math.msu.edu}
\author[Kouichi Yasui]{Kouichi Yasui${}^2$}
\address{Department~of~Mathematics, Graduate School~of~Science, \newline\indent Hiroshima~University 1-3-1 Kagamiyama, \newline\indent  Higashi-Hiroshima, 739-8526, Japan}
\email{kyasui@hiroshima-u.ac.jp}
\thanks{\noindent ${}^1$Partially supported by NSF grants DMS 0905917 and  FRG 1065827.  
${}^2$Partially supported by JSPS KAKENHI Grant Number 25800048.}
\date{February 20, 2015}
\begin{document}

\begin{abstract}
We construct a contact 5-manifold supported by infinitely many distinct open books with the identity monodromy and pairwise exotic Stein pages (i.e.\ pages are pairwise homeomorphic but non-diffeomorphic Stein fillings of a fixed contact 3-manifold), moreover we describe a process of generating infinitely many such examples. In contrast to this result, on each of $\#_{n} S^{2}\times S^{3}$ and $\#_{n}  S^{2}\tilde{\times} S^{3}$ $(n\geq 2)$ we construct infinitely many open books with pairwise exotic Stein pages (and identity monodromy) supporting mutually distinct contact structures.
\end{abstract}

\maketitle

\section{Introduction}\label{sec:intro}
For a given contact 3-manifold, let us consider the 5-dimensional open book whose page is a Stein filling of the contact 3-manifold and whose monodromy is the identity map. This open book supports a contact structure on the resulting closed 5-manifold, such contact structures were extensively studied,  for example  in \cite{DGV} Ding-Geiges-van Koert  classified a certain class of contact 5-manifolds. 
 
\medskip
 
 From the view point of 4-dimensional symplectic topology, it is natural to ask how the choices of Stein fillings of a fixed contact 3-manifold affect the resulting contactomorphism types of $5$-manifolds. However, not much is known to the best of the authors' knowledge. Here we use the following terminology. 
\begin{definition}We say that a family of 5-dimensional open books has pairwise exotic Stein pages, if pages of its members are pairwise exotic (i.e.\ homeomorphic but non-diffeomorphic as smooth 4-manifolds), and are Stein fillings of the same contact 3-manifold. 
\end{definition}

Recently, Ozbagci-van Koert~\cite{OV} showed that infinitely many open books with exotic Stein pages and the identity monodromy can support pairwise distinct contact structures on the same smooth 5-manifold. 
They used the exotic Stein manifolds of \cite{AY6} as pages, and used  \cite{DGV} to distinguish their contact structures. 
In this paper, we show that distinct open books with exotic Stein pages can support the same contact 5-manifold, contrary to the expectation from the result of Ozbagci-van Koert. 

\begin{theorem}\label{intro:thm:main}
There exists a contact 5-manifold supported by infinitely many distinct open books with pairwise exotic Stein pages and the identity monodromy. In fact, any closed contact 5-manifold with $b_2\geq 2$, that is induced from the boundary of a subcritical Stein manifold without 1-handles, is such an example. 
\end{theorem}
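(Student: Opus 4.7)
The plan is to combine two ingredients: (i) Cieliebak's theorem that every subcritical Stein manifold of real dimension $\geq 6$ is Stein deformation equivalent to a product $X\times\mathbb{C}$, and that such manifolds are classified, up to Stein deformation, by their underlying smooth type; and (ii) the cork-twisting construction of~\cite{AY6}, which produces infinite families of exotic Stein fillings of a fixed contact $3$-manifold that become smoothly trivialized after a single $I$-stabilization.

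First I would apply Cieliebak's splitting theorem to the given $V$: since $V$ has no $1$-handles, it is Stein deformation equivalent to $W\times\mathbb{C}$ for some Stein $4$-manifold $W$ also without $1$-handles. The resulting decomposition $\partial V = (W\times S^1)\cup(\partial W\times D^2)$ is then precisely the $5$-dimensional open book with page $W$ and identity monodromy, and the contact structure on $\partial V$ coincides with the one supported by this open book. In particular $b_2(W)=b_2(\partial V)\geq 2$.

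The core step is to construct an infinite family $\{W_i\}_{i\geq 1}$ of Stein $4$-manifolds, each without $1$-handles, all pairwise exotic Stein fillings of the common contact $3$-manifold $(\partial W,\xi_W)$, and satisfying the stabilization equivalence $W_i\times I\cong W_j\times I$ smoothly. The hypothesis $b_2(W)\geq 2$ is used to guarantee enough topology in $W$ to embed a nontrivial cork, and one produces the family by iterated cork twists as in~\cite{AY6}; the stabilization property is a standard feature of cork twisting. Crossing once more with $I$ then yields $W_i\times D^2\cong W_j\times D^2$ smoothly, and Cieliebak's classification upgrades this to a Stein deformation equivalence $W_i\times\mathbb{C}\simeq W_j\times\mathbb{C}$. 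Consequently the contact boundaries $\partial(W_i\times\mathbb{C})$ are mutually contactomorphic to $\partial V$, and each is supported by the open book with page $W_i$ and identity monodromy, giving infinitely many such open books all supporting the fixed contact $5$-manifold.

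The principal difficulty I expect is the middle step: arranging infinitely many pairwise exotic $W_i$ with the simultaneous stabilization property for an arbitrary $W$ satisfying our hypotheses, rather than just for the specific examples studied in~\cite{AY6}. The strategy will be to embed a cork into $W$ (the $b_2\geq 2$ hypothesis providing enough room for a nontrivial twist) and to iterate the twisting — either on the same cork or on different corks produced by further handle slides — so that the resulting Stein fillings are pairwise exotic while $W_i\times I\cong W_j\times I$ for all $i,j$.
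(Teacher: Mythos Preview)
Your proposal has a genuine gap at precisely the point you flag as the ``principal difficulty'': for an \emph{arbitrary} Stein 4-manifold $W$ without 1-handles and with $b_2\geq 2$, there is no known mechanism for producing infinitely many pairwise exotic Stein fillings of $(\partial W,\xi_W)$ by cork-twisting inside $W$. The hypothesis $b_2\geq 2$ does not by itself guarantee an embedded cork whose twists yield infinitely many distinct smooth structures, much less ones carrying Stein structures filling the \emph{same} contact boundary. The constructions in \cite{AY6} and \cite{Y6} apply to specific families built from explicit handle diagrams (via Luttinger surgery), not to a general $W$; your plan to ``embed a cork into $W$'' and iterate has no supporting theorem behind it. A secondary issue is that Cieliebak's result classifies subcritical Stein manifolds up to deformation by their \emph{almost complex} type, not their bare smooth type, so the step upgrading $W_i\times D^2\cong W_j\times D^2$ to Stein deformation equivalence also needs justification (compare the paper's final Remark, which treats this as a nontrivial input from \cite{OV}).

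The paper avoids this entirely by a different route: the Ding--Geiges--van Koert classification (Theorem~\ref{thm:DGV}, packaged as Proposition~\ref{prop:DGV:rotation}) shows that the contactomorphism type of the 5-manifold supported by $(X,\mathrm{id})$, for $X$ a Stein handlebody without 1-handles, depends only on $b_2(X)$ and the rotation divisor $r(X)$. Hence the pages need not be homeomorphic to the original $W$ at all; one only needs \emph{some} family of exotic Stein handlebodies realising the prescribed pair $(r,n)$. The paper then draws explicit Stein diagrams on the known exotic families $X_p^{(m)}$ (and their boundary sums $X_p^{(m)}\natural Y_n$) so that the rotation numbers are $0$ and $m_0-2$, independent of $p$; thus all members of the family share the same $(r,n)$ and support the same contact 5-manifold. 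Your requirement that the $W_i$ fill the original $(\partial W,\xi_W)$ is stronger than what the theorem asks and is exactly what makes your approach stall; the missing idea is Proposition~\ref{prop:DGV:rotation}.
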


Note that by \cite{C} any 6-dimensional subcritical Stein manifold is the product of $D^2$ and a 4-dimensional Stein manifold. Therefore, contact 5-manifolds in this theorem are the contact boundary of the product of $D^2$ and Stein manifolds without 1-handles. Such smooth 5-manifolds are always diffeomorphic to either $\#_n S^2\times S^3$ or $\#_n S^2\tilde{\times} S^3$ with $n\geq 2$, and each of them admits infinitely many contact structures (cf.\ \cite{DGV}, see also Remark~\ref{rem:subcritical}). 
\medskip

In contrast to the above theorem, we also extend the aforementioned result of Ozbagci-van Koert. Let $S^2\tilde{\times} S^3$ denote the total space of the non-trivial $S^3$-bundle over $S^2$, then the following holds (they proved the $n=2$ case). 
\begin{theorem}\label{intro:thm:extension of OV}
For a closed 5-manifold diffeomorphic to either $\#_n S^2\times S^3$ or $\#_n S^2\tilde{\times} S^3$ with $n\geq 2$, there exist infinitely many open books with pairwise exotic Stein pages and the identity monodromy which support pairwise distinct contact structures on the 5-manifold. 
\end{theorem}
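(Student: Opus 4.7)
The plan is to extend the Ozbagci--van Koert construction from $n=2$ to every $n\ge 2$, by producing an infinite family of pairwise exotic Stein fillings of the appropriate topological type and then arguing that the associated contact 5-manifolds (with identity monodromy) carry pairwise distinct contact structures.

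First I would construct, for each $n\ge 2$, an infinite family $\{W_{n,i}\}_{i\in\mathbb{N}}$ of pairwise exotic Stein 4-manifolds sharing a common contact boundary, all with $b_2=n$ and admitting a handle decomposition without 1-handles. For $n=2$, the Akbulut--Yasui examples of \cite{AY6} used in \cite{OV} already have this form. For $n>2$, I would boundary-connected-sum each $W_{2,i}$ with a fixed Stein handlebody $H$ consisting of $n-2$ two-handles, whose spin type is chosen to match the desired target 5-manifold ($\#_n S^2\times S^3$ versus $\#_n S^2\tilde{\times} S^3$). The resulting $W_{n,i}:=W_{2,i}\,\natural\, H$ remain Stein, have no 1-handles, have $b_2=n$, and share the common contact boundary $\partial W_{2,i}\,\#\,\partial H$; pairwise homeomorphism extends across the sum. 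Pairwise non-diffeomorphism must be preserved as well, for which one uses an invariant compatible with boundary-connected sum (for instance a Seiberg--Witten-type invariant of the closed 4-manifold obtained by capping off).

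Next, by forming the 5-dimensional open book with page $W_{n,i}$ and identity monodromy, one obtains a contact 5-manifold $(M_{n,i},\xi_{n,i})$ whose underlying smooth type is $\partial(W_{n,i}\times D^2)$. Since $W_{n,i}$ has no 1-handles, $W_{n,i}\times D^2$ is a subcritical Stein domain by \cite{C}, and Smale's classification of simply connected 5-manifolds identifies $M_{n,i}$ with $\#_n S^2\times S^3$ or $\#_n S^2\tilde{\times} S^3$ according to the chosen spin type, as recorded after Theorem~\ref{intro:thm:main}. In particular all $(M_{n,i},\xi_{n,i})$ share the same underlying smooth 5-manifold of the prescribed type.

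The final and principal step is to distinguish the contact structures $\xi_{n,i}$ up to contactomorphism, and this is where I expect the main obstacle. Following \cite{OV}, I would use the invariants of \cite{DGV}, which essentially recover Seiberg--Witten-type data of the subcritical filling $W_{n,i}\times D^2$ from the contact structure on the boundary, the $W_{n,i}$ being constructed precisely so that these data differ. The nontrivial point is to verify that the Ozbagci--van Koert invariants continue to separate an infinite family after boundary-summing with $H$; a gluing/connected-sum formula for the relevant invariant would suffice and make the argument uniform in $n$. An alternative that sidesteps this issue is to produce exotic Stein fillings with $b_2=n$ directly by applying the corks or log transforms of \cite{AY6} inside a larger ambient 4-manifold, in which case the Ozbagci--van Koert argument applies essentially verbatim.
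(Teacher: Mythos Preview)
Your overall architecture---boundary-summing the $b_2=2$ exotic Stein fillings with a fixed Stein handlebody $H$ to reach $b_2=n$---is exactly what the paper does. The genuine gap is in the final step, where you distinguish the contact structures. You describe the \cite{DGV} invariant as ``Seiberg--Witten-type data of the subcritical filling,'' and you expect it to separate the $\xi_{n,i}$ because the pages are exotic. This is a misconception: the relevant invariant from \cite{DGV} (their Theorem~4.8, restated in the paper as Theorem~\ref{thm:DGV}) is simply the first Chern class $c_1(\xi)$, equivalently the maximal divisor of the tuple of rotation numbers of the Legendrian attaching circles of the page. This is almost-contact data and sees nothing of the smooth exotica; two exotic Stein pages with the same rotation divisor yield contactomorphic 5-manifolds (which is precisely the content of Theorem~\ref{intro:thm:main}). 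So the distinctness of the $\xi_{n,i}$ does \emph{not} follow from the pages being exotic, and there is no gluing formula for a Seiberg--Witten invariant to establish.

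What is actually needed, and what the paper supplies, is a \emph{specific choice of Stein structure} on each $X_p^{(m)}$ (a new Legendrian realization, Figure~\ref{fig:many_contact}) whose rotation numbers are $0$ and $r(p,m)=p(m_1-1)+m_0-2$; the point is that this depends on $p$. Taking $H=Y_n$ with all rotation numbers $0$ then gives $r(X_p^{(m)}\natural Y_n)=r(p,m)$, so by Proposition~\ref{prop:DGV:rotation} the supported contact structures $\zeta_{r(p,m),n+2}$ are pairwise distinct as $p$ varies, while the parity of $r(p,m)$ (controlled by $m_0,m_1$) fixes the smooth 5-manifold. The exoticness of the pages is a separate issue, handled by Theorem~\ref{thm:recall:boundary_sum} (from \cite{Y6}); your worry about preserving non-diffeomorphism under $\natural H$ is legitimate, but it is resolved by citing that result rather than by an ad hoc capping argument. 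In short, the two features---exotic pages and distinct contact structures---are arranged independently, and your proposal conflates them.
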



Here we outline our proofs of these results. These exotic Stein pages are diffeomorphic to exotic Stein fillings obtained in \cite{AY6} and \cite{Y6}. To obtain the results, we construct new Stein structures (and hence boundary contact structures) on these 4-manifolds. We distinguish 5-dimensional contact structures using results of \cite{DGV}. Regarding Theorem~\ref{intro:thm:extension of OV}, we use the argument similar to \cite{OV}. The new ingredient is 4-dimensional exotic Stein fillings obtained in \cite{Y6}. Also, we use yet another new Stein structures on these exotic 4-manifolds. 
\medskip\\
\textbf{Acknowledgements}. This work was done during the second author's stay at Michigan State University. He would like to thank them for their hospitality. 

\section{Exotic Stein fillings of contact 3-manifolds}In this section, we construct new Stein structures on certain exotic smooth 4-manifolds. For basics of handlebody, Stein structures and contact structures, the readers can consult \cite{A_book}, \cite{GS} and \cite{OS1}. 

We first recall the infinitely many pairwise exotic 4-manifolds which are Stein fillings of the same contact 3-manifolds. For integers $m_0, m_1, m_2, p$ satisfying $m_0\geq 2$, $m_1\geq 1$, $m_2\geq 1$, and $p\geq 1$, let $X^{(m)}$ and $X^{(m)}_{p}$ be the simply connected compact smooth 4-manifolds with $b_2=2$ given by the handlebody diagrams in Figure~\ref{fig:smooth_nuclei}, where we put $m=(m_0,m_1,m_2)$. The $m=(3,2,1)$ case of these manifolds was studied in \cite{AY6} using Stein handlebody, and the general case was subsequently discussed in \cite{Y6} using Lefschetz fibration. We remark that each $X^{(m)}_{p}$ is obtained from $X^{(m)}$ by a Luttinger surgery (i.e.\ a logarithmic transformation with the multiplicity one), as shown in \cite{AY6} and \cite{Y6}. 
\begin{figure}[h!]
\begin{center}
\includegraphics[width=4.0in]{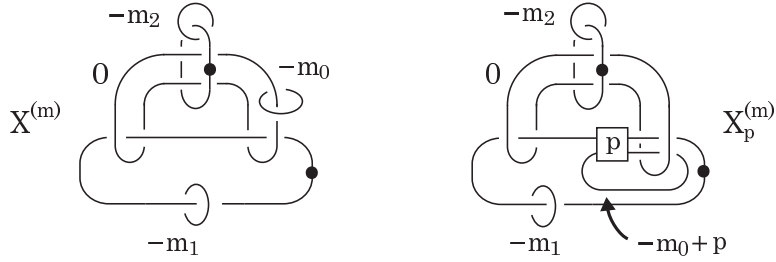}
\caption{$X^{(m)}$ and $X^{(m)}_p$ ($m_0\geq 2$,\, $m_1\geq 2$,\, $m_2\geq 1$,\, $p\geq 1$).}
\label{fig:smooth_nuclei}
\end{center}
\end{figure}

\vspace{.05in}
These 4-manifolds have the following properties, where we fix a given $3$-tuple $m=(m_0, m_1, m_2)$ of positive integers satisfying $m_0\geq 2$ and $m_1\geq 2$. 

\begin{theorem}[\cite{AY6}, \cite{Y6}]\label{thm:recall:nuclei} The infinite family $\{X_p^{(m)}\mid p\in \mathbb{N}\}$ contains infinitely many pairwise homeomorphic but non-diffeomorphic smooth 4-manifolds which are Stein fillings of the same contact 3-manifold. 

\newpage

\end{theorem}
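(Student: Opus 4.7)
The plan is to establish three facts in parallel: each $X_p^{(m)}$ admits a Stein structure whose induced contact boundary is the same for every $p$; the members of the family $\{X_p^{(m)}\}_{p\in\mathbb{N}}$ are pairwise homeomorphic; and infinitely many of them are pairwise non-diffeomorphic.

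For the Stein filling statement, I would first upgrade the smooth handlebody of $X^{(m)}$ in Figure~\ref{fig:smooth_nuclei} to a Stein handlebody in the sense of Eliashberg--Gompf, by isotoping each 2-handle so that it is attached along a Legendrian knot whose Thurston--Bennequin framing is one less than the specified integer framing; this equips $X^{(m)}$ with a Stein structure. Next I would locate inside $X^{(m)}$ a suitable Lagrangian torus $T$ and arrange that $X_p^{(m)}$ arises from $X^{(m)}$ by $1/p$-Luttinger surgery along $T$, as already asserted in the excerpt. Since Luttinger surgery along a Lagrangian torus is supported in the interior and preserves a Stein structure near the boundary, all of the $X_p^{(m)}$ become Stein fillings of a single fixed contact 3-manifold.

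For the homeomorphism statement, Luttinger surgery preserves the intersection form, the Euler characteristic, and the signature. Combined with a routine van Kampen computation showing that $X_p^{(m)}$ is still simply connected, Freedman's classification of simply connected topological 4-manifolds with a given boundary lets me conclude that the $X_p^{(m)}$ are pairwise homeomorphic rel boundary.

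The main obstacle is separating diffeomorphism types, and here I would turn to Seiberg--Witten theory. The idea is to use the Luttinger surgery formula, which expresses the SW basic classes of $X_p^{(m)}$ in terms of those of $X^{(m)}$ together with the Poincar\'e dual of $T$, and to show that infinitely many values of $p$ yield genuinely distinct sets of basic classes, even after quotienting by the action of the diffeomorphism group on $\mathrm{Spin}^c$ structures. Since the $X_p^{(m)}$ themselves are not closed, in practice one would embed the essential piece into a closed symplectic 4-manifold, compute SW invariants there, and then read off the invariants of the original Stein pieces via a gluing or excision argument. This is the technical heart of the argument and is precisely the computation carried out in~\cite{AY6} and~\cite{Y6}, from which the stated theorem follows.
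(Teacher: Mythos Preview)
The paper does not actually prove this theorem; it is quoted from \cite{AY6} and \cite{Y6}. Nonetheless, the paper's surrounding discussion (Remark~\ref{remark:exotic_Stein} and the explicit Stein handlebody diagrams in Figures~\ref{fig:Stein_nuclei_odd} and~\ref{fig:many_contact}) makes clear how the argument in those references runs, and it differs from yours in one essential point.

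Your Stein-filling step contains a genuine gap. Luttinger surgery along a Lagrangian torus is a \emph{symplectic} operation: it produces a symplectic manifold and, since the surgery is supported away from the boundary, the induced contact structure on the boundary is indeed preserved. But it does \emph{not} produce a Stein (or even Weinstein) manifold in general; the existence of a global plurisubharmonic exhaustion is not local near the boundary, and there is no reason the Liouville vector field should extend across the surgered torus region. So after your surgery you only know that $X_p^{(m)}$ is a strong symplectic filling of the fixed contact boundary, not a Stein filling. The route taken in \cite{AY6}, \cite{Y6} and in the present paper avoids this: for each $p$ one draws an explicit Legendrian handlebody diagram of $X_p^{(m)}$ (after cancelling the 1-handles) and invokes Eliashberg--Gompf to get a Stein structure directly. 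The boundary contact structures of these Stein handlebodies could in principle vary with $p$; the ``same contact 3-manifold'' clause then follows from the pigeonhole argument recorded in Remark~\ref{remark:exotic_Stein}: a closed 3-manifold carries only finitely many Stein fillable contact structures up to contactomorphism, so infinitely many of the $X_p^{(m)}$ must fill the same one. This is also why the statement says ``contains infinitely many'' rather than ``all members are''.

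Your outlines of the homeomorphism step (Freedman/Boyer after checking $\pi_1$ and the intersection form) and of the non-diffeomorphism step (embed into closed symplectic manifolds and distinguish via Seiberg--Witten basic classes) match the strategy of the cited papers.
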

\begin{theorem}[\cite{Y6}]\label{thm:recall:boundary_sum}  For a given Stein filling $Y$ of a contact 3-manifold, the infinite family $\{X_p^{(m)}\natural Y\mid p\in \mathbb{N}\}$ contains infinitely many pairwise  homeomorphic but non-diffeomorphic smooth 4-manifolds which are Stein fillings of the same contact 3-manifold. 
\end{theorem}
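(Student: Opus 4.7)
The plan is to bootstrap from Theorem~\ref{thm:recall:nuclei} by showing that taking the boundary connect sum with the fixed Stein filling $Y$ preserves all three required properties: being a Stein filling of a single contact $3$-manifold, pairwise homeomorphism, and pairwise non-diffeomorphism.

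\textbf{Stein filling of a fixed contact $3$-manifold.} The boundary connect sum of two Stein $4$-manifolds admits a canonical Stein structure (by Eliashberg's handle theory, or equivalently by the Weinstein construction), and its induced contact boundary is the contact connect sum of the two original contact boundaries. Hence each $X_p^{(m)}\natural Y$ is Stein with contact boundary $(\partial X_p^{(m)},\xi_p)\#(\partial Y,\xi_Y)$. By Theorem~\ref{thm:recall:nuclei} one may pass to an infinite subfamily $\{X_{p_i}^{(m)}\}_{i\in\mathbb{N}}$ whose $\xi_{p_i}$ all coincide with a single contact structure $\xi$ on a single $3$-manifold, so every $X_{p_i}^{(m)}\natural Y$ fills the same fixed contact $3$-manifold $(\partial X^{(m)},\xi)\#(\partial Y,\xi_Y)$.

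\textbf{Homeomorphism.} Any homeomorphism $X_{p_i}^{(m)}\to X_{p_j}^{(m)}$ supplied by Theorem~\ref{thm:recall:nuclei} may be arranged to send the boundary $3$-ball used in forming $\natural Y$ to the corresponding $3$-ball (since on the fixed boundary $3$-manifold, embedded $3$-balls are ambient-isotopic once the boundary identification is fixed); extending by the identity on $Y$ yields $X_{p_i}^{(m)}\natural Y\cong X_{p_j}^{(m)}\natural Y$ topologically.

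\textbf{Non-diffeomorphism (the main obstacle).} Since non-diffeomorphism is not automatically preserved under boundary sum, the core of the proof is to transplant the gauge-theoretic detection underlying Theorem~\ref{thm:recall:nuclei}. In that proof the $X_p^{(m)}$ are distinguished by embedding $X^{(m)}$ into a closed symplectic $4$-manifold $Z$ in such a way that the Luttinger surgery producing $X_p^{(m)}$ from $X^{(m)}$ extends to a log transform on $Z$, and the Seiberg--Witten basic classes of the resulting $Z_p$ vary with $p$. My plan is to fix once and for all a symplectic cap $W$ of $(-\partial Y,\xi_Y)$ (which exists by Eliashberg/Etnyre--Honda) and then form, for each $p$, a closed symplectic $4$-manifold $\widehat{Z}_p$ containing $X_p^{(m)}\natural Y$ by attaching the same cap $Z\setminus\mathrm{int}\,X^{(m)}$ on the $X^{(m)}$-side and $W$ on the $Y$-side, so that $\widehat{Z}_p$ is obtained from $\widehat{Z}_0$ by the same Luttinger surgery. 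The crucial feature is that the Luttinger surgery is supported in the interior of the $X^{(m)}$-region, away from both the $\natural$-attaching $3$-ball and the $Y$-summand; consequently the $(Y\cup W)$-piece contributes $p$-independent Seiberg--Witten data, while the $X_p^{(m)}$-piece produces the same $p$-dependent change of basic classes as in Theorem~\ref{thm:recall:nuclei}. A hypothetical diffeomorphism $X_{p_i}^{(m)}\natural Y\cong X_{p_j}^{(m)}\natural Y$ would therefore yield conflicting SW computations on $\widehat{Z}_{p_i}$ versus $\widehat{Z}_{p_j}$. The delicate point to verify is that the symplectic capping on the $Y$-side can be arranged so that $b_2^+(\widehat{Z}_p)\ge 2$ and so that the identification of Luttinger surgery with a log transform survives in $\widehat{Z}_p$; both reduce to the localization of the surgery inside $X^{(m)}$ together with standard symplectic gluing.
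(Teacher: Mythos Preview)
The paper does not prove this statement at all: Theorem~\ref{thm:recall:boundary_sum} is quoted from \cite{Y6} without proof, so there is no ``paper's own proof'' to compare against. Your outline of the Stein-filling and homeomorphism parts is fine and matches how one would expect the argument in \cite{Y6} to begin.

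The non-diffeomorphism step, however, has a genuine gap. Your closed manifold $\widehat{Z}_p$, built by capping the $X^{(m)}$-side with $Z\setminus\mathrm{int}\,X^{(m)}$ and the $Y$-side with a symplectic cap $W$, is (once you sort out that $\partial(X_p^{(m)}\natural Y)$ is the \emph{connected} sum $\partial X_p^{(m)}\#\partial Y$) nothing other than the connected sum $Z_p\#(Y\cup_\partial W)$. If $b_2^+(Y\cup_\partial W)\geq 1$---which you cannot rule out for a general Stein filling $Y$ and symplectic cap $W$---then all Seiberg--Witten invariants of $\widehat{Z}_p$ vanish and carry no information. Even setting this aside, the sentence ``a hypothetical diffeomorphism $X_{p_i}^{(m)}\natural Y\cong X_{p_j}^{(m)}\natural Y$ would yield conflicting SW computations on $\widehat{Z}_{p_i}$ versus $\widehat{Z}_{p_j}$'' is unjustified: a diffeomorphism of the compact pieces need not restrict to the identity (or anything extendable) on the boundary, so there is no induced diffeomorphism of the closed manifolds to contradict.

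The argument in \cite{Y6} (and already in \cite{AY6} for the $Y$-free case) avoids both problems by extracting from the closed embeddings an \emph{intrinsic} diffeomorphism invariant of the compact $4$-manifold---essentially the genus function on $H_2$, bounded below via the adjunction inequality in a single symplectic closing of $X_p^{(m)}\natural Y$ and bounded above by explicit surfaces living entirely in the $X_p^{(m)}$ summand. Because the genus function is an invariant of the compact manifold itself, no extension of a hypothetical diffeomorphism is needed, and one never compares SW invariants of two different closed manifolds directly. If you rework your third step along these lines (cap $\partial(X_p^{(m)}\natural Y)$ with one symplectic cap, apply adjunction to the classes coming from $H_2(X_p^{(m)})$, and compare with the explicit surfaces), the argument goes through.
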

\begin{remark}\label{remark:exotic_Stein} These theorems hold for any choices of Stein structures on $X_p^{(m)}$'s, since every closed 3-manifold admits at most finitely many Stein fillable contact structures up to contactomorphism (see Lemma~3.4 in \cite{AY6}). Note that the boundary contact structures in these theorems depend on the choices of Stein structures on these 4-manifolds. 
\end{remark}

Next we construct a new Stein structure on $X_p^{(m)}$ to prove Theorem~\ref{intro:thm:main}. (For Theorem~\ref{intro:thm:extension of OV}, we use yet another Stein structure.) By canceling 1-handles of $X_p^{(m)}$, we obtain Figure~\ref{fig:2-handle_nuclei}. From this picture, we obtain the Stein handlebody diagram of $X_p^{(m)}$ in Figure~\ref{fig:Stein_nuclei_odd},  in the case where $m_1$ is odd. Here the boxes in the Stein picture means the Legendrian version of left handed full twists shown in Figure~\ref{fig:Legendrian_left_twists}. Note that each framing in the diagram is one less than the Thurston-Bennequin number. We equip $X^{(m)}_p$ with the Stein structure given by this picture. One can check the lemma below. 

\begin{lemma}\label{lem:picture:rotation} The rotation numbers of the attaching circles of the Stein handlebody $X^{(m)}_p$ in Figure~\ref{fig:Stein_nuclei_odd} are $0$ and $m_0-2$. 
\end{lemma}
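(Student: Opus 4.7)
The plan is to compute each rotation number directly from the Stein handlebody picture in Figure~\ref{fig:Stein_nuclei_odd} using the standard cusp formula
\[
r(K) \;=\; \tfrac{1}{2}\bigl(c_d(K) - c_u(K)\bigr),
\]
where, for a given orientation of the Legendrian attaching circle $K$, $c_d(K)$ and $c_u(K)$ denote the number of down-cusps and up-cusps of its front projection.

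First I would fix an orientation on each of the two attaching circles and trace them through the diagram. Contributions to $r$ split naturally into two pieces: (a) the cusps visible in the underlying front projection once the twist-box notation is suppressed, and (b) the cusps produced by each box representing a Legendrian realization of a left-handed full twist, as specified by Figure~\ref{fig:Legendrian_left_twists}. For (b), I would compute once and for all the per-twist cusp contribution of a single left-handed full twist, as a function of how many strands pass through the box and with which relative orientation. The key point is that two strands oriented \emph{parallel} through a full twist contribute cusps whose up/down counts cancel in $r$, whereas two strands oriented \emph{antiparallel} through a full twist contribute a nonzero balance.

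With these per-box contributions tabulated, the computation becomes purely additive: multiply each contribution by the twist count ($m_0$, $m_1$, $m_2$, or $p$) and sum over the boxes through which the circle in question passes. I expect the first attaching circle to meet every twist box with parallel strands, so that all box contributions cancel and only the base-diagram cusps survive, summing to $0$. For the second attaching circle, I expect parallel passes through the $m_1$-, $m_2$-, and $p$-labeled boxes (so those terms again cancel), antiparallel passes through the $m_0$-labeled box (contributing $m_0$), and a residual $-2$ coming from the cusps of the underlying front projection.

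The main obstacle is not conceptual but diagrammatic: one must orient both circles coherently, track how each threads through every box (distinguishing parallel from antiparallel passes), and correctly resolve the boxed symbol of Figure~\ref{fig:Legendrian_left_twists} into cusps. The assumption that $m_1$ is odd is relevant here, because the parity of $m_1$ determines whether certain strands emerge from the $m_1$-labeled box on the same side or opposite sides, and hence which orientation pattern the other boxes inherit. Once this bookkeeping is carried out, the dependence on $p$, $m_1$, and $m_2$ washes out and only the $m_0$-labeled box contributes nontrivially, yielding $r = 0$ and $r = m_0 - 2$ as claimed.
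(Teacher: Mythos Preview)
The paper gives no proof of this lemma; it simply says ``one can check the lemma below'' and moves on. Your proposal is exactly what that phrase means: compute each rotation number from the front projection via $r(K)=\tfrac{1}{2}(c_d-c_u)$, tallying the cusps box by box using the specific Legendrian realization of a left-handed full twist fixed in Figure~\ref{fig:Legendrian_left_twists}. So there is no alternative approach to compare against, and your plan is the right one.

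One caution on the heuristic you invoke. The rule ``parallel strands through a full-twist box contribute zero to $r$, antiparallel strands contribute nontrivially'' is not a general fact about Legendrian twist boxes; it is a statement about the \emph{particular} Legendrian model drawn in Figure~\ref{fig:Legendrian_left_twists}. Different Legendrian realizations of the same topological twist can distribute up- and down-cusps differently, so you should extract the per-strand, per-twist cusp counts directly from that figure rather than from a parity principle. Once you do that, the rest of your outline---orienting the two components coherently, using the parity of $m_1$ to determine how strands exit the $m_1$-box and hence how they enter the others, and then summing---is straightforward bookkeeping that yields $0$ and $m_0-2$ as stated.
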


%
%
\begin{figure}[h!]
\begin{center}
\includegraphics[width=4.0in]{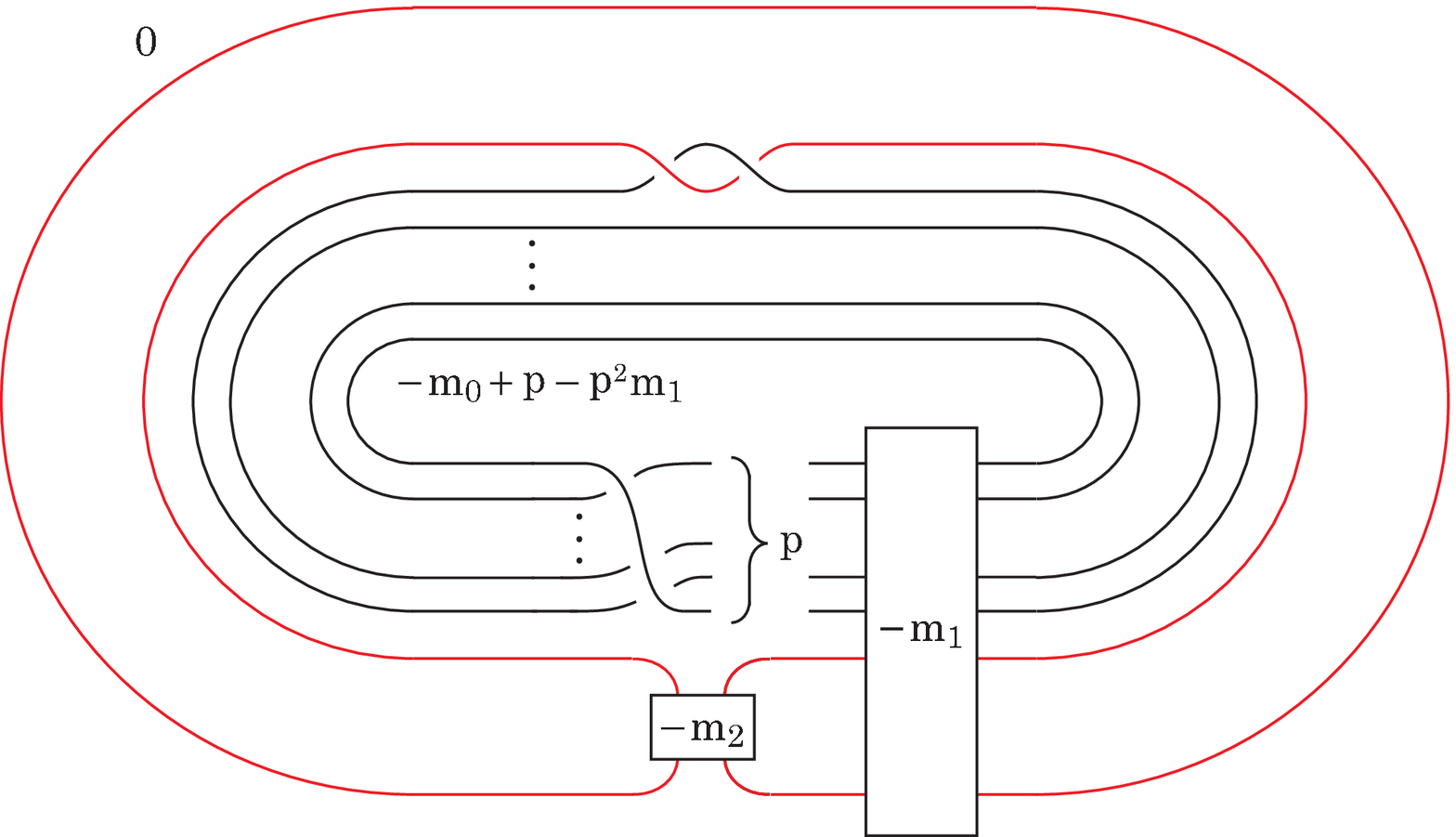}
\caption{$X^{(m)}_p$}
\label{fig:2-handle_nuclei}
\end{center}
\end{figure}
\begin{figure}[h!]
\begin{center}
\includegraphics[width=3.8in]{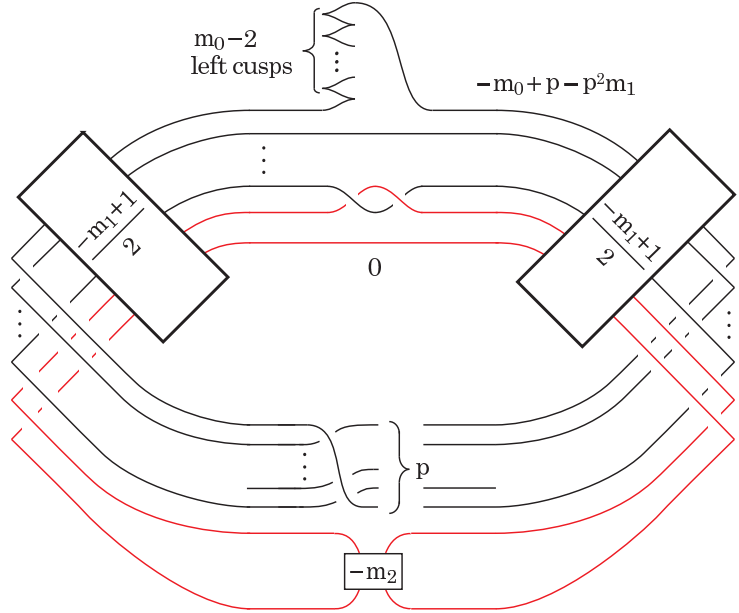}
\caption{Stein handlebody diagram of $X_p^{(m)}$ with $m_1\equiv 1\pmod{2}$}
\label{fig:Stein_nuclei_odd}
\end{center}
\end{figure}
\begin{figure}[h!]
\begin{center}
\includegraphics[width=4.0in]{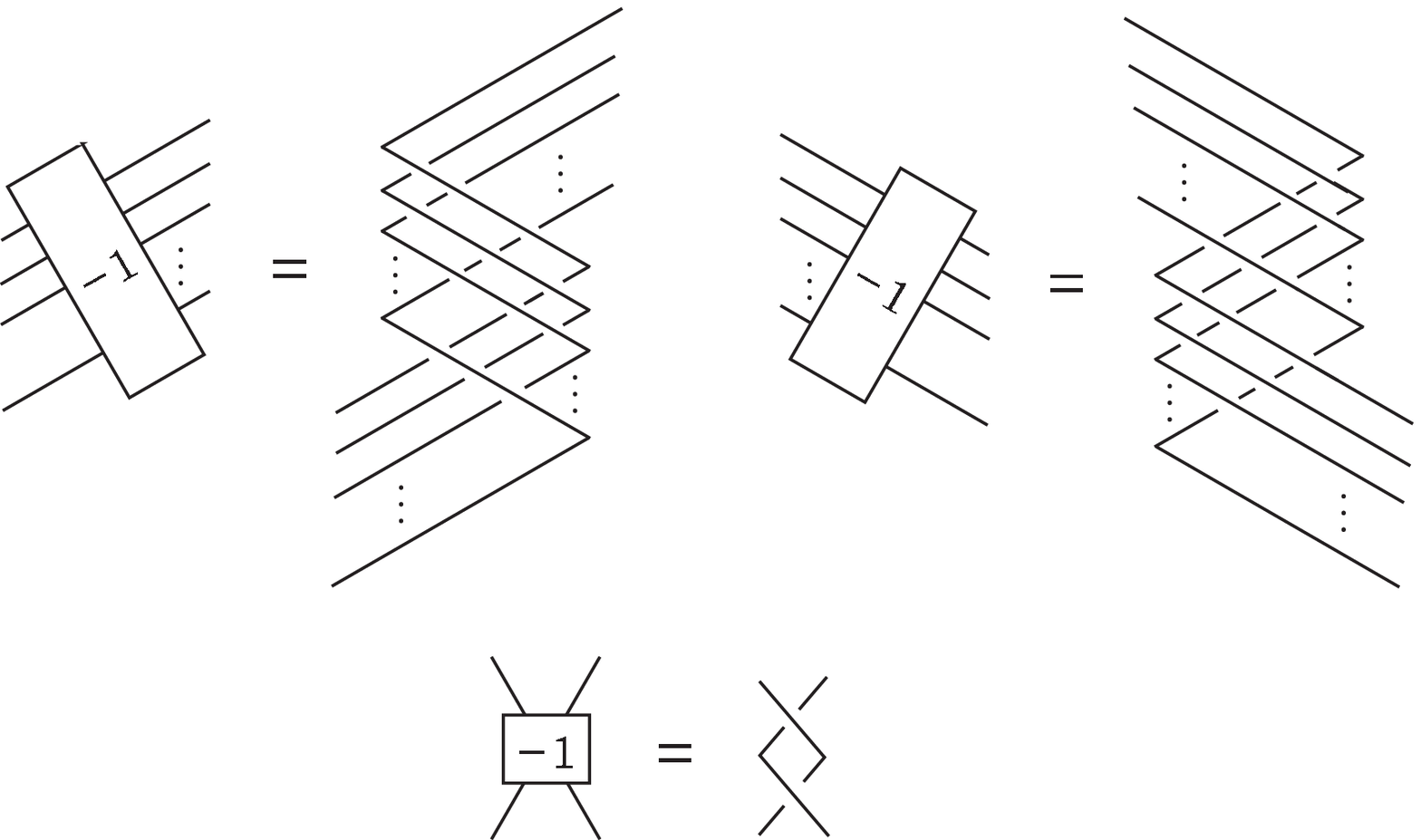}
\caption{Legendrian version of left handed full twists}
\label{fig:Legendrian_left_twists}
\end{center}
\end{figure}


\section{Contact 5-manifolds admitting open books with exotic pages}\label{sec:main}
In this section we prove Theorem~\ref{intro:thm:main} applying Stein fillings in the previous section. Throughout this section, we assume $m=(m_0,m_1,m_2)$ satisfies $m_1\equiv 1\pmod{2}$. 
Let $(M^{(m)}_p, \xi_p^{(m)})$ be the contact 5-manifold supported by the open book $(X^{(m)}_p, id)$. For a positive integer $n$, let $Y_{n}$ be a 4-dimensional Stein handlebody with $b_2=n$ and without 1-handles such that the rotation number of the attaching circle of each 2-handle is $0$. One can easily find such a $Y_{n}$ for each $n$. We denote by $X^{(m)}_{p,n}$ the boundary connected sum $X^{(m)}_{p}\natural Y_{n}$, and equip it with the Stein structure induced from its natural Stein handlebody. Let $(M^{(m)}_{p,n}, \xi^{(m)}_{p,n})$ be the contact 5-manifold supported by the open book $(X^{(m)}_{p,n}, id)$. 
%
First we recall the following useful results ($S^2\tilde{\times} S^3$ denotes the total space of the non-trivial $S^3$-bundle over $S^2$). 

\begin{theorem}[Ding-Geiges-van Koert, Proposition 4.5 in \cite{DGV}]\label{thm:DGV:knot}
Let $K_i$ $(i=1,2)$ be a Legendrian knot in the standard contact $S^3$, and let $(M_i,\xi_i)$ be the contact 5-manifold supported by the open book with the identity monodromy such that the page is the Stein filling obtained from $D^4$ by attaching a 2-handle along $K_i$ with contact $-1$ framing. Then $(M_1,\xi_1)$ and $(M_2,\xi_2)$ are contactomorphic to each other if and only if the absolute values of the rotation numbers of $K_1$ ad $K_2$ are eaqual to each other. Furthermore, $M_1$ is diffeomorphic to $S^2\times S^3$ $($resp.\ $S^2\tilde{\times} S^3$$)$, if the rotation number of $K_1$ is even $($resp.\ odd$)$. 
\end{theorem}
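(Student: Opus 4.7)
The plan is to realize each $(M_i,\xi_i)$ as the contact boundary of a subcritical Stein $6$-manifold and then invoke an $h$-principle for the attaching isotropic circle. First I would identify, for an open book with identity monodromy and Stein page $W_i$, the total space $(M_i,\xi_i)$ with the contact boundary of $W_i\times D^2$ equipped with its product Stein structure (after rounding corners). Since $W_i=D^4\cup_{K_i}h^2$, the product $W_i\times D^2$ is diffeomorphic to $D^6$ with a single $2$-handle $\widetilde h^2=D^2\times D^4$ attached; it is Weinstein-subcritical because its unique handle has index $2$, strictly below the middle index $3$. The attaching sphere $\widetilde K_i\subset(S^5,\xi_{\mathrm{std}})$ is the ``suspension'' of $K_i$: it is the image of $K_i$ under the equatorial inclusion $S^3\hookrightarrow S^5=\partial(D^4\times D^2)$, and it is automatically isotropic for $\xi_{\mathrm{std}}$.

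Next I would compute the formal invariants of $\widetilde K_i$ as an isotropic circle in $(S^5,\xi_{\mathrm{std}})$. The isotropic dimension $1$ is strictly below the Legendrian dimension $2$, so we are in the subcritical range; since all circles in $S^5$ are smoothly isotopic, the only formal obstruction is the rotation number, i.e.\ the degree of the canonical Lagrangian framing relative to a fixed complex trivialization of $\xi_{\mathrm{std}}|_{\widetilde K_i}$. A direct Chern-class calculation should give $\mathrm{rot}(\widetilde K_i)=\pm\mathrm{rot}(K_i)$, the sign ambiguity coming from orientation reversal of $\widetilde K_i$ (which is allowed inside $S^5$ even though $K_i$ itself is oriented in $S^3$). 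Hence $\widetilde K_1$ and $\widetilde K_2$ are formally isotropically isotopic iff $|\mathrm{rot}(K_1)|=|\mathrm{rot}(K_2)|$.

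The crucial input is then the $h$-principle for subcritical isotropic embeddings (Gromov--Eliashberg): for isotropic submanifolds of dimension strictly below the Legendrian dimension, formal isotropic isotopy implies genuine isotropic isotopy. Applying this to $\widetilde K_i$, equal absolute rotation numbers force $\widetilde K_1$ and $\widetilde K_2$ to be isotropically isotopic, whence the Weinstein $2$-handle attachments along them yield Weinstein cobordisms with contactomorphic contact boundaries; this gives the ``if'' direction. For the converse, one reads $|\mathrm{rot}(K_i)|$ off the first Chern class $c_1(\xi_i)$ evaluated on the $S^2$-generator of $H_2(M_i)$, which distinguishes the contact structures as soon as the rotation numbers differ in absolute value.

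Finally, for the smooth classification, $M_i=\partial(D^6\cup_{\widetilde K_i}\widetilde h^2)$ is an $S^3$-bundle over the $S^2$-core of $\widetilde h^2$, so it must be diffeomorphic to either $S^2\times S^3$ or $S^2\tilde{\times} S^3$, the two cases being distinguished by the framing of $\widetilde h^2$ modulo $\pi_1(SO(4))\cong\mathbb{Z}/2$. The Stein framing agrees modulo $2$ with $\mathrm{rot}(K_i)$, which is a routine handle-calculus check: even rotation number yields the trivial bundle $S^2\times S^3$, odd rotation number yields $S^2\tilde{\times} S^3$. The main obstacle I expect is the rotation-number identification in the second step and a careful verification of the subcritical $h$-principle hypotheses; the remaining pieces are essentially bookkeeping with framings and characteristic classes.
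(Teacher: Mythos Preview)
The paper does not give a proof of this statement: it is quoted verbatim as Proposition~4.5 of Ding--Geiges--van Koert \cite{DGV} and used as a black box. There is therefore no ``paper's own proof'' to compare your proposal against.

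That said, your outline is essentially the argument one finds in \cite{DGV}: identify the open book with trivial monodromy as $\partial(W_i\times D^2)$, view this as a subcritical Weinstein $6$-manifold built from $D^6$ by a single isotropic $2$-handle, and invoke the $h$-principle for subcritical isotropic embeddings to reduce the contactomorphism question to the formal invariant (the rotation class in $\pi_1(U(2))\cong\mathbb{Z}$), which you correctly trace back to $|\mathrm{rot}(K_i)|$. The converse via $c_1(\xi_i)$ and the smooth identification via the framing in $\pi_1(SO(4))\cong\mathbb{Z}/2$ are also the standard moves. The only places requiring care are exactly the ones you flag: matching the $5$-dimensional rotation class with the $3$-dimensional $\mathrm{rot}(K_i)$ (one must track the conformal symplectic normal bundle, not just a complex trivialization of $\xi_{\mathrm{std}}$), and checking that the Weinstein framing of the $2$-handle in $D^6$ reduces mod $2$ to the parity of $\mathrm{rot}(K_i)$. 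Both are carried out in \cite{DGV}; your sketch is on the right track but would need those computations spelled out to stand on its own.
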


\begin{theorem}[Ding-Geiges-van Koert, Theorem 4.8 in \cite{DGV}]\label{thm:DGV}Let $(M_1,\xi_1)$ and $(M_2, \xi_2)$ be two simply connected closed contact 5-manifolds admitting subcritical Stein fillings without 1-handles. If there exists an isomorphism $H^2(M_1;\mathbb{Z})\to H^2(M_2;\mathbb{Z})$ that sends $c_1(\xi_1)$ to $c_1(\xi_2)$, then $(M_1,\xi_1)$ and $(M_2, \xi_2)$ are contactomorphic to each other. 
\end{theorem}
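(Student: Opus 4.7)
My plan is to combine Cieliebak's splitting theorem with the subcritical Weinstein h-principle. By \cite{C}, each subcritical Stein filling $F_i$ of $(M_i,\xi_i)$ is Stein-deformation equivalent to a product $D^2 \times W_i$, where $W_i$ is a 4-dimensional Stein surface. The absence of 1-handles in $F_i$ forces the same for $W_i$, so $W_i = D^4 \cup (\text{2-handles})$ attached along a Legendrian link $L_i\subset S^3$ with framings $\operatorname{tb}-1$. Thus $(M_i,\xi_i)$ is the contact boundary of the 6-dimensional Weinstein handlebody obtained by crossing $W_i$ with $D^2$, and its Stein homotopy class is determined by the Legendrian data $L_i$ up to Weinstein moves.

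Next, I would compute $c_1(\xi_i)$ from this Legendrian data. A standard calculation in the Stein handlebody shows that $c_1(F_i)$ is represented by the cocycle pairing each 2-handle cocore with the rotation number of the corresponding component of $L_i$. Since $M_i$ is simply connected and $H^2(M_i;\mathbb{Z})$ is generated by the restrictions of these cocores to the boundary, the class $c_1(\xi_i)$ is completely determined by the tuple of rotation numbers of the components of $L_i$, modulo the change-of-basis induced by handle slides.

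Given the isomorphism $\phi\colon H^2(M_1;\mathbb{Z})\to H^2(M_2;\mathbb{Z})$ with $\phi(c_1(\xi_1))=c_1(\xi_2)$, I would realize $\phi$ at the level of Legendrian data. Both $M_i$ are diffeomorphic to $\#_n S^2 \times S^3$ or $\#_n S^2 \tilde{\times} S^3$, whose self-diffeomorphism groups act transitively enough on $H^2$ to identify $M_1$ with $M_2$ compatibly with $\phi$; after such an identification one arranges $L_1$ and $L_2$ to have matching rotation numbers componentwise. The Cieliebak--Eliashberg h-principle for subcritical Weinstein manifolds then asserts that two Legendrian 2-handle attachments in the subcritical 6-dimensional setting with matching formal invariants (smooth isotopy class and rotation number) are Weinstein-deformation equivalent, because the additional $D^2$-factor provides room for the Legendrian attaching 2-spheres in $\partial(D^2\times W_i)$ to be isotoped into a common standard position.

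The main obstacle is precisely this last step: Legendrian knots in $S^3$ with the same classical invariants need not be Legendrian isotopic, so the argument cannot be run at the level of the 4-dimensional pages. The subcritical 6-dimensional setting saves us, because the relevant attaching spheres are Legendrian 2-spheres in a 5-dimensional contact manifold, and for these subcritical isotropic embeddings a genuine h-principle holds. Once the two Weinstein structures are connected through a 1-parameter family of subcritical Stein structures on $D^2\times W$, integrating the Liouville flow in a collar of the boundary produces the desired contactomorphism $(M_1,\xi_1)\cong (M_2,\xi_2)$ by a direct application of Gray's theorem to the induced family of contact structures.
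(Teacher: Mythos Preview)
Note first that the paper does not prove this statement: it is quoted verbatim as Theorem~4.8 of \cite{DGV} and used as a black box. So there is no in-paper proof to compare against; your outline is essentially a reconstruction of the Ding--Geiges--van Koert argument, and the overall strategy (split via \cite{C}, reduce to handle data, invoke an h-principle for subcritical isotropic attaching spheres, then Gray stability) is the right one.

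There is, however, a genuine dimensional slip in the key step. The $6$-dimensional subcritical filling $F_i\simeq D^2\times W_i$ is built from $D^6$ by attaching \emph{2-handles}, and the attaching sphere of a $2$-handle is an $S^1$, not an $S^2$. So the objects you must control are \emph{isotropic circles} in the standard contact $S^5$, not ``Legendrian $2$-spheres''. This matters: a $2$-sphere in a contact $5$-manifold that is isotropic is automatically Legendrian (maximal-dimensional isotropic), hence \emph{critical}, and no general h-principle classifies Legendrian $2$-spheres by formal data. By contrast, isotropic circles in $(S^5,\xi_{\mathrm{std}})$ are genuinely subcritical isotropic embeddings, and Gromov's h-principle does classify them up to isotropic isotopy by their formal invariants (smooth isotopy class together with the rotation class in the conformal symplectic normal bundle). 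That is exactly the input \cite{DGV} uses.

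A second point to tighten: the passage ``the absence of $1$-handles in $F_i$ forces the same for $W_i$'' and the subsequent matching of rotation numbers ``componentwise'' are both a bit quick. What one really uses is that any automorphism of $H^2(M_i;\mathbb{Z})\cong\mathbb{Z}^n$ can be realised by a sequence of handle slides and Legendrian stabilisations on the attaching link in $S^5$ (equivalently, on $L_i\subset S^3$ after crossing with $D^2$), and that each such move preserves the contactomorphism type of the boundary. Once that is in place, the divisibility of $c_1$ is the only invariant, and the h-principle for isotropic circles finishes the argument. With the attaching spheres corrected from $S^2$ to $S^1$, your sketch becomes a faithful summary of the \cite{DGV} proof.
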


For simplicity of the notation, we use the following terminology. 
\begin{definition}
For a 4-dimensional Stein handlebody $X$ with $b_2\geq 1$ and without 1-handles, we call the maximal common divisor of the rotation numbers of the attaching circles of the 2-handles of $X$ as the rotation divisor of $X$, and denote it by $r(X)$. In the case where all the rotation numbers of the attaching circles are $0$, $r(X)$ is defined to be $0$. Note that each attaching circle is a Legendrian knot in the standard contact $S^3$. 
\end{definition}

We show the following proposition using the above theorems. 

\begin{proposition}\label{prop:DGV:rotation} Let $X_1, X_2$ be 4-dimensional Stein handlebodies with $b_2=n$ and without 1-handles. Then two contact 5-manifolds $(M_1, \xi_1)$ and $(M_2,\xi_2)$ supported by the open books $(X_1,id)$ and $(X_2,id)$ are contactomorphic to each other, if and only if $r(X_1)=r(X_2)$. Furthermore, $M_1$ is diffeomorphic to $\#_nS^2\times S^3$  $($resp.\ $\#_n S^2\tilde{\times} S^3$$)$, if $r(X_1)$ is even $($resp.\  odd$)$. 
\end{proposition}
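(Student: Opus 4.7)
The plan is to identify each $(M_i,\xi_i)$ with the contact boundary of the $6$-dimensional subcritical Stein manifold $X_i\times D^2$, translate $c_1(\xi_i)$ and the diffeomorphism type of $M_i$ into rotation-number data, and then invoke Theorem~\ref{thm:DGV}. Since $X_i$ consists of a single $0$-handle and $n$ $2$-handles with no $1$-handles, $X_i\times D^2$ also has only a $0$-handle and $n$ $2$-handles, so it is a subcritical Stein filling without $1$-handles and the hypotheses of Theorem~\ref{thm:DGV} hold.

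The key computation is to show $H^2(M_i;\mathbb{Z})\cong\mathbb{Z}^n$ with $c_1(\xi_i)=(r_{i,1},\dots,r_{i,n})$, where the $r_{i,j}$ are the rotation numbers of the Legendrian attaching circles of $X_i$. Lefschetz duality gives $H^2(X_i\times D^2,M_i)\cong H_4(X_i\times D^2)=0$ and $H^3(X_i\times D^2,M_i)\cong H_3(X_i\times D^2)=0$ from the handle structure, so the restriction $H^2(X_i\times D^2;\mathbb{Z})\to H^2(M_i;\mathbb{Z})$ is an isomorphism. The standard rotation-number formula for $c_1$ of a Stein handlebody, applied to $X_i\times D^2$ in the basis dual to the $2$-handle cocores, then yields the claim.

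For the diffeomorphism type, the splitting $T(X_i\times D^2)\cong \pi^*TX_i\oplus\underline{\mathbb{R}}^2$ gives $w_2(M_i)\equiv c_1(\xi_i)\pmod{2}$. Subcriticality without $1$-handles makes $M_i$ simply connected with torsion-free $H_2$, so the Smale--Barden classification forces $M_i\cong \#_n S^2\times S^3$ when $w_2(M_i)=0$ and $M_i\cong \#_n S^2\tilde{\times} S^3$ otherwise. Since $w_2(M_i)=0$ iff every $r_{i,j}$ is even iff $r(X_i)$ is even, the diffeomorphism-type assertion follows.

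For the contactomorphism part, Theorem~\ref{thm:DGV} reduces $(M_1,\xi_1)\simeq(M_2,\xi_2)$ to the existence of an isomorphism $H^2(M_1;\mathbb{Z})\to H^2(M_2;\mathbb{Z})$ carrying $c_1(\xi_1)$ to $c_1(\xi_2)$ (the converse direction being automatic from functoriality of $c_1$). Under the identifications above this becomes the elementary question of when two vectors in $\mathbb{Z}^n$ lie in the same $GL_n(\mathbb{Z})$-orbit, the invariant being the gcd of the entries, which is by definition $r(X_i)$. The main bookkeeping hurdle is verifying that the restriction isomorphism $H^2(X_i\times D^2)\to H^2(M_i)$ is compatible with the $2$-handle cocore basis used for the rotation-number formula — a standard check for Stein handlebodies — after which the result follows from the two DGV theorems and basic linear algebra over $\mathbb{Z}$.
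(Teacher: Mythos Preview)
Your argument is correct, and for the contactomorphism assertion it follows essentially the same route as the paper: both reduce, via Theorem~\ref{thm:DGV}, to asking whether some automorphism of $\mathbb{Z}^n$ carries $c_1(X_1)$ to $c_1(X_2)$, and then observe that the $GL_n(\mathbb{Z})$-orbit of a vector in $\mathbb{Z}^n$ is determined by the gcd of its entries, i.e.\ by $r(X_i)$. The only cosmetic difference is that the paper obtains the identification $H^2(M_i;\mathbb{Z})\cong H^2(X_i;\mathbb{Z})$ sending $c_1(\xi_i)$ to $c_1(X_i)$ by quoting the proof of Proposition~4.5 in \cite{DGV}, while you produce it yourself from Lefschetz duality for the pair $(X_i\times D^2,M_i)$; the content is the same.

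For the diffeomorphism-type assertion the two proofs genuinely diverge. You compute $w_2(M_i)\equiv c_1(\xi_i)\bmod 2$, check that $M_i$ is simply connected with $H_2\cong\mathbb{Z}^n$, and then invoke the Smale--Barden classification of simply connected closed $5$-manifolds to conclude that the parity of $r(X_i)$ decides between $\#_n S^2\times S^3$ and $\#_n S^2\tilde{\times} S^3$. The paper instead recycles the first part of the proposition: since the contactomorphism type depends only on $r(X_1)$, one may replace $X_1$ by a boundary sum of Stein pieces each built from a single $2$-handle on a Legendrian unknot with rotation number of the appropriate parity, and then read off the diffeomorphism type of $M_1$ from Theorem~\ref{thm:DGV:knot}. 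Your approach is more direct and does not need Theorem~\ref{thm:DGV:knot}, at the price of importing the Smale--Barden theorem; the paper's approach stays entirely within the framework of \cite{DGV} at the cost of an extra reduction step.
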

\begin{proof}
 Due to the proof of Proposition 4.5 in \cite{DGV}, we see that there exists an isomorphism $H^2(M_1;\mathbb{Z})\to H^2(X_1;\mathbb{Z})$ which sends $c_1(\xi_1)$ to $c_1(X_1)$.  Therefore, according to Theorem~\ref{thm:DGV}, there exists an isomorphism $H^{2}(X_{1};\mathbb{Z})\to H^2(X_{2};\mathbb{Z})$ which maps $c_1(X_1)$ to $c_1(X_2)$, if and only if $(M_1, \xi_1)$ and $(M_2,\xi_2)$ are contactomorphic to each other. 
 
 A result of Gompf \cite{G1} tells that $c_1(X_1)$ is represented by a cocycle whose value on the 2-chain corresponding to each 2-handle of $X_1$ is the rotation number of the attaching circle of the 2-handle. Since $X_1$ is simply connected, we see that $H^2(X_1;\mathbb{Z})$ is isomorphic to $Hom(H_2(X_1;\mathbb{Z});\mathbb{Z})$. The maximal divisor of $c_1(X_1)$ is thus equal to $r(X_1)$. Therefore, there exists an isomorphism $H^{2}(X_{1};\mathbb{Z})\to H^2(X_{2};\mathbb{Z})$ which maps $c_1(X_1)$ to $c_1(X_2)$ if and only if $r(X_1)=r(X_2)$. Note that, for any two primitive elements of $\mathbb{Z}^n$, there exists an automorphism of $\mathbb{Z}^n$ which maps one to the other. The former claim of the proposition is now straightforward. 
 
 We check the latter claim of the proposition. Due to the former claim, we may assume that $X_1$ is the boundary sum of Stein handlebodies each of which is obtained from $D^4$ by attaching a 2-handle along a Legendrian unknot with the contact $-1$ framing, and that the parity of the rotation number of each Legendrian unknot coincides with the one of $r(X_1)$. Theorem~\ref{thm:DGV:knot} thus implies the latter claim of the proposition.
\end{proof}

As a consequence of this proposition, we see that rotation divisors determine supporting contact structures. We denote the resulting contact structure as follows. 
\begin{definition}\label{def:classification}For a 4-dimensional Stein handlebody with $b_2=n$ and without 1-handles, its rotation divisor $r$ uniquely determines a contact 5-manifold supported by the open book whose page is the given Stein handlebody and whose monodromy is the identity. We denote the resulting contact structure by $\zeta_{r,n}$. Note that the resulting 5-manifold is diffeomorphic to $\#_{n}S^2\times S^3$ (resp.\ $\#_{n}S^2\tilde{\times} S^3$), if $r$ is even (resp.\ odd). 
\end{definition}
\begin{remark}\label{rem:subcritical}
$(1)$ Due to the above proposition, $\zeta_{r,n}$ is contactomorphic to $\zeta_{r', n'}$ if and only if $r=r'$ and $n=n'$. \smallskip\\
$(2)$ Any closed contact 5-manifold admitting a subcritical Stein filling without 1-handles is contactomorphic to some $\zeta_{r,n}$. This is because a contact 5-manifold admits a subcritical Stein filling without 1-handles if and only if the contact structure admits an open book with the identity monodromy whose page is a Stein handlebody without 1-handle. In fact, such a 6-dimensional subcritical Stein filling is the product of $D^2$ and a 4-dimensional Stein filling without 1-handles. For these facts, see for example \cite{C}. Note that the boundary 5-manifold is simply connected, since it is a double of a 5-dimensional handlebody consisting of only 0- and 2-handles. 
\end{remark}

Now we can immediately determine the contactomorphism types of our contact 5-manifolds from Lemma~\ref{lem:picture:rotation} and Definition~\ref{def:classification}. 

\begin{proposition}\label{prop:contact_type_5-dim}

$(1)$ $(M^{(m)}_{p}, \xi_{p}^{(m)})$ is contactomorphic to $(\#_2 S^2\times S^3, \zeta_{m_0-2,2})$ if $m_0\equiv 0\pmod{2}$, and to $(\#_2 S^2\tilde{\times} S^3, \zeta_{m_0-2,2})$ if $m_0\equiv 1\pmod{2}$. \smallskip\\
$(2)$ $(M^{(m)}_{p,n}, \xi_{p,n}^{(m)})$ is contactomorphic to $(\#_{n+2} S^2\times S^3, \zeta_{m_0-2,n+2})$ if $m_0\equiv 0\pmod{2}$, and to $(\#_{n+2} S^2\tilde{\times} S^3, \zeta_{m_0-2,n+2})$ if $m_0\equiv 1\pmod{2}$. 
\end{proposition}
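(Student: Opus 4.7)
The proposition amounts to a direct computation of the rotation divisor $r(X)$ of the relevant Stein handlebodies, followed by an appeal to the classification machinery already established (Proposition~\ref{prop:DGV:rotation} and Definition~\ref{def:classification}).

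For part (1), my plan is to apply Lemma~\ref{lem:picture:rotation} to the Stein handlebody diagram of $X_p^{(m)}$ in Figure~\ref{fig:Stein_nuclei_odd}. That lemma says the rotation numbers of the two attaching circles are $0$ and $m_0-2$. Hence by definition $r(X_p^{(m)}) = \gcd(0, m_0-2) = m_0-2$ when $m_0\geq 3$, and $r(X_p^{(m)}) = 0$ when $m_0 = 2$; in either case $r(X_p^{(m)}) = m_0 - 2$. Since $b_2(X_p^{(m)}) = 2$ and $X_p^{(m)}$ has no 1-handles in this Stein presentation, Proposition~\ref{prop:DGV:rotation} (via Definition~\ref{def:classification}) identifies the supported contact 5-manifold as $\zeta_{m_0-2,2}$. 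The smooth type follows from the same proposition: it is $\#_2 S^2\times S^3$ when $m_0-2$ is even (equivalently $m_0$ even) and $\#_2 S^2\tilde{\times} S^3$ when $m_0-2$ is odd (equivalently $m_0$ odd).

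For part (2), the only new input is to compute $r(X^{(m)}_{p,n})$ for the boundary connected sum $X_p^{(m)}\natural Y_n$. A boundary sum of Stein handlebodies without 1-handles is again a Stein handlebody without 1-handles whose 2-handles are the union of the 2-handles of the summands, with rotation numbers inherited from each factor. By construction every attaching circle of $Y_n$ has rotation number $0$, so the list of rotation numbers of $X^{(m)}_{p,n}$ is $\{0, m_0-2, 0, \dots, 0\}$ with $n+2$ entries. Thus $r(X^{(m)}_{p,n}) = m_0-2$, and $b_2(X^{(m)}_{p,n}) = n+2$. Applying Proposition~\ref{prop:DGV:rotation} and Definition~\ref{def:classification} as before yields the stated contactomorphism type $\zeta_{m_0-2, n+2}$ and the corresponding smooth type $\#_{n+2}S^2\times S^3$ or $\#_{n+2}S^2\tilde{\times} S^3$ according to the parity of $m_0$.

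There is essentially no obstacle: the entire argument is bookkeeping, since the nontrivial content (the rotation number computation in Lemma~\ref{lem:picture:rotation} and the classification in Proposition~\ref{prop:DGV:rotation}) is already in place. The only subtlety worth flagging explicitly is the boundary case $m_0 = 2$, where all rotation numbers vanish and the convention in the definition of rotation divisor gives $r = 0 = m_0 - 2$, so the uniform statement remains correct.
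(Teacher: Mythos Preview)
Your proof is correct and follows exactly the paper's approach: the paper simply states that the proposition follows immediately from Lemma~\ref{lem:picture:rotation} and Definition~\ref{def:classification}, and you have spelled out that deduction in detail (including the boundary-sum bookkeeping for part~(2) and the $m_0=2$ edge case).
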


We are ready to prove Theorem~\ref{intro:thm:main}. 
\begin{proof}[Proof of Theorem~\ref{intro:thm:main}] Due to Remark~\ref{rem:subcritical}, it suffices to prove that, for each $n\geq 0$ and each $r\geq 0$, the contact structure $\zeta_{r,n+2}$ is supported by infinitely many distinct open books with pairwise exotic Stein pages and the identity monodromy. 

We assume $m=(m_0,m_1,m_2)$ satisfies $m_0\geq 2$, $m_1\geq 3$, $m_1\equiv 1 \pmod{2}$ and $m_2\geq 1$. For $n\geq 1$, Proposition~\ref{prop:contact_type_5-dim} tells that, each $(M^{(m)}_{p,n}, \xi_{p,n}^{(m)})$ is contactomorphic to $(\#_{n+2} S^2{\times} S^3, \zeta_{m_0-2,n+2})$ if $m_0$ is even, and to $(\#_{n+2} S^2\tilde{\times} S^3, \zeta_{m_0-2,n+2})$ if $m_0$ is odd. Therefore, varying $p$ and applying Theorem~\ref{thm:recall:boundary_sum}, we obtain the desired claim in the $n\geq 1$ case. Applying the same argument to $(M^{(m)}_{p}, \xi_{p}^{(m)})$, we obtain the $n=0$ case. 
\end{proof}



\section{Open books with exotic pages supporting distinct contact structures}
In this section we prove Theorem~\ref{intro:thm:extension of OV} by constructing a new Stein structure on $X_p^{(m)}$. In the rest of this section, we use the same symbol as those in the previous sections. However, we give different Stein or contact structures for the smooth manifolds $X_{p}^{(m)}$, $X_{p,n}^{(m)}$, $M_p^{(m)}$ and $M_{p,n}^{(m)}$. 

For a positive integer $p$ and a 3-tuple $m=(m_0,m_1,m_2)$ of positive integers with $m_0\geq 2$, we obtain the Stein handlebody diagram of $X_p^{(m)}$ in Figure~\ref{fig:many_contact} using Figure~\ref{fig:2-handle_nuclei}. We equip $X_p^{(m)}$ with the Stein structure given by this Stein handlebody diagram. Note that we do not need the extra condition $m_1\equiv 1\pmod{2}$ unlike the previous section. One can check the rotation numbers. 
\begin{lemma}\label{lem:picture:rotation2} The rotation numbers of the attaching circles of the Stein handlebody $X^{(m)}_p$ in Figure~\ref{fig:many_contact} are $0$ and $r(p,m):=p(m_1-1)+m_0-2$. 
\end{lemma}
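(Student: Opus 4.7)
The plan is to compute the rotation numbers directly from the front projections appearing in Figure~\ref{fig:many_contact}, using Gompf's standard formula \cite{G1}: for an oriented Legendrian knot $K$ in the standard contact $S^3$ presented by a front diagram,
\[
\mathrm{rot}(K) \;=\; \tfrac{1}{2}\bigl(c_d - c_u\bigr),
\]
where $c_d$ and $c_u$ are the numbers of down-oriented and up-oriented cusps, respectively. The lemma then reduces to careful cusp bookkeeping for the two attaching circles in the picture.

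First, I would identify the two Legendrian components. For the component whose rotation is claimed to be $0$, I would exhibit a horizontal reflection symmetry of its front projection (together with a reversal of orientation, if needed) that interchanges up-cusps with down-cusps, forcing $c_d = c_u$ and hence $\mathrm{rot}=0$. This is the same mechanism that yielded the $0$ rotation number in Lemma~\ref{lem:picture:rotation}, and the component in question has essentially the same local Legendrian presentation here.

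Second, for the other component I would decompose its front into three types of local pieces: (a) the base part coming from the construction that already appeared in Figure~\ref{fig:Stein_nuclei_odd}, whose cusp count contributes $m_0-2$ exactly as in Lemma~\ref{lem:picture:rotation}; (b) the $p$ Legendrian left-handed full-twist boxes that are new to Figure~\ref{fig:many_contact}, each of which has the Legendrian realization pictured in Figure~\ref{fig:Legendrian_left_twists}; and (c) the connecting Legendrian arcs, which can be arranged so that each up-cusp is balanced by a down-cusp and thus contribute $0$ to $\mathrm{rot}$. The decisive local computation is to check, from Figure~\ref{fig:Legendrian_left_twists}, that a single Legendrian left-handed full twist on the relevant number of strands contributes exactly $m_1-1$ to the rotation number once the strands are oriented as dictated by the global orientation of the knot. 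Summing the contributions from the $p$ twist boxes with the base $m_0-2$ produces $p(m_1-1)+m_0-2 = r(p,m)$.

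The main obstacle is the careful orientation bookkeeping inside the twist boxes: one must confirm that the signs of the cusps in each Legendrian full twist add coherently and do not cancel across boxes, and that the connecting arcs can indeed be arranged cusp-balanced without altering the Thurston--Bennequin framings already fixed in the diagram. Once the per-box contribution $m_1-1$ is verified, the formula $r(p,m)=p(m_1-1)+m_0-2$ and the absence of the parity hypothesis on $m_1$ (contrast with Lemma~\ref{lem:picture:rotation}) both follow immediately from the symmetry of the twist box under the relevant orientation choices.
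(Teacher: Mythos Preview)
The paper gives no written proof of this lemma: it simply precedes the statement with ``One can check the rotation numbers'' and leaves the verification as a routine reading of the front diagram in Figure~\ref{fig:many_contact}. Your approach via Gompf's cusp formula $\mathrm{rot}(K)=\tfrac{1}{2}(c_d-c_u)$ is precisely the computation the reader is expected to carry out, so in that sense your proposal agrees with the paper's (implicit) argument.

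One caveat: your decomposition of the second attaching circle into a ``base part from Figure~\ref{fig:Stein_nuclei_odd}'' plus ``$p$ new twist boxes'' is an inference from the shape of the formula $p(m_1-1)+m_0-2$, not something you have verified against the actual figure. In particular, Figure~\ref{fig:many_contact} is a \emph{different} Legendrian realization of the same smooth attaching link as Figure~\ref{fig:Stein_nuclei_odd} (this is the whole point of Section~4), so one should not assume it is literally the old picture with extra boxes inserted. The honest version of your argument is simply: orient the two components in Figure~\ref{fig:many_contact}, count cusps directly (including the contribution of each left-twist box from Figure~\ref{fig:Legendrian_left_twists} with the induced strand orientations), and read off $0$ and $p(m_1-1)+m_0-2$. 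That is all the paper is asking you to do, and your outline of the bookkeeping is adequate once it is tied to the actual diagram rather than to a conjectured decomposition.
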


\begin{figure}[h!]
\begin{center}
\includegraphics[width=4.0in]{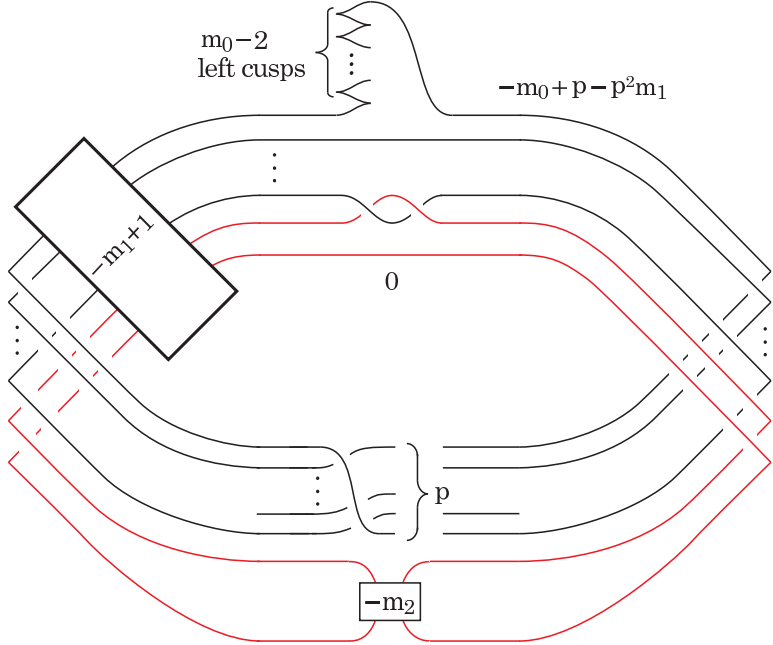}
\caption{Stein handlebody diagram of $X_p^{(m)}$}
\label{fig:many_contact}
\end{center}
\end{figure}

We define contact 5-manifolds $(M^{(m)}_p, \xi_p^{(m)})$ and $(M^{(m)}_{p,n}, \xi^{(m)}_{p,n})$ in the same way as Section~\ref{sec:main}, with respect to the above Stein structure on $X^{(m)}_p$. Similarly to the proof of Proposition~\ref{prop:contact_type_5-dim}, we can determine these contact structures. 

\begin{proposition}\label{prop:contact_type_many}
$(1)$ $(M^{(m)}_{p}, \xi_{p}^{(m)})$ is contactomorphic to $(\#_2 S^2\times S^3, \zeta_{r(p,m),2})$ if $r(p,m)\equiv 0\pmod{2}$, and to $(\#_2 S^2\tilde{\times} S^3, \zeta_{r(p,m),2})$ if $r(p,m)\equiv 1\pmod{2}$. \smallskip\\
$(2)$ $(M^{(m)}_{p,n}, \xi_{p,n}^{(m)})$ is contactomorphic to $(\#_{n+2} S^2\times S^3, \zeta_{r(p,m),n+2})$ if $r(p,m)\equiv 0\pmod{2}$, and to $(\#_{n+2} S^2\tilde{\times} S^3, \zeta_{r(p,m),n+2})$ if $r(p,m)\equiv 1\pmod{2}$. 
\end{proposition}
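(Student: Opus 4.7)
The plan is to imitate the proof of Proposition~\ref{prop:contact_type_5-dim}: compute the rotation divisor of the relevant Stein handlebody, and then invoke Definition~\ref{def:classification} (which rests on Proposition~\ref{prop:DGV:rotation}) to read off both the supported contact structure and the underlying smooth 5-manifold.

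For part~(1), Lemma~\ref{lem:picture:rotation2} tells us that the two attaching circles of the Stein handlebody $X_p^{(m)}$ depicted in Figure~\ref{fig:many_contact} have rotation numbers $0$ and $r(p,m)=p(m_1-1)+m_0-2$. Using the convention $\gcd(0,k)=k$ (so that a single nonzero rotation number decides the GCD), the rotation divisor equals $r(X_p^{(m)})=r(p,m)$. Since $b_2(X_p^{(m)})=2$ and $X_p^{(m)}$ has no 1-handles, Definition~\ref{def:classification} immediately identifies $(M_p^{(m)},\xi_p^{(m)})$ with $\zeta_{r(p,m),2}$, on the smooth 5-manifold $\#_2 S^2\times S^3$ or $\#_2 S^2\tilde{\times} S^3$ according to the parity of $r(p,m)$.

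For part~(2), I would observe that by the defining property of $Y_n$ each of its 2-handle attaching circles has rotation number $0$, and that the 2-handle attaching circles of the boundary connected sum $X_{p,n}^{(m)}=X_p^{(m)}\natural Y_n$ are precisely those of the two summands. Hence the multiset of rotation numbers is $\{0,\,r(p,m),\,0,\ldots,0\}$, whose greatest common divisor is still $r(p,m)$. Since $b_2(X_{p,n}^{(m)})=n+2$ and $X_{p,n}^{(m)}$ has no 1-handles, Definition~\ref{def:classification} identifies $(M_{p,n}^{(m)},\xi_{p,n}^{(m)})$ with $\zeta_{r(p,m),n+2}$, living on $\#_{n+2}S^2\times S^3$ or $\#_{n+2}S^2\tilde{\times} S^3$ according to the parity of $r(p,m)$.

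There is no substantive obstacle here; Proposition~\ref{prop:DGV:rotation} has already done the heavy lifting, converting the problem into a purely arithmetic statement about rotation numbers. The only mild points of care are (a) confirming the rotation-number computation of Lemma~\ref{lem:picture:rotation2} (a routine Legendrian check on the Stein diagram), and (b) checking that the boundary connected sum interacts correctly with the rotation divisor, which is immediate from the vanishing of the rotation numbers on $Y_n$.
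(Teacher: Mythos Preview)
Your proposal is correct and follows exactly the approach indicated in the paper, which simply says to argue ``similarly to the proof of Proposition~\ref{prop:contact_type_5-dim}'' by combining Lemma~\ref{lem:picture:rotation2} with Definition~\ref{def:classification}. You have merely made explicit the routine computation of the rotation divisor that the paper leaves implicit.
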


Now we can prove Theorem~\ref{intro:thm:extension of OV}. 

\begin{proof}[Proof of Theorem~\ref{intro:thm:extension of OV}]
We fix the parameter $m=(m_0,m_1,m_2)$ with $m_0\geq 2$ and $m_1\geq 2$. Proposition~\ref{prop:contact_type_many} tells that each $(M^{(m)}_{p,n}, \xi_{p,n}^{(m)})$ is contactomorphic to $(\#_{n+2} S^2\times S^3, \zeta_{r(p,m), n+2})$ if $r(p,m)$ is even, and to $(\#_{n+2} S^2\tilde{\times} S^3, \zeta_{r(p,m),n+2})$ if $r(p,m)$ is odd. Therefore, varying the parameter $p$ and using Theorem~\ref{thm:recall:boundary_sum} and Remark~\ref{rem:subcritical}, we see that both $\#_{n+2} S^2\times S^3$ and $\#_{n+2} S^2\tilde{\times} S^3$ satisfies the desired claim in the case $n\geq 1$. Applying the same argument to $(M^{(m)}_{p}, \xi_{p}^{(m)})$, we obtain the $n=0$ case. Therefore the theorem follows. 
\end{proof}


\begin{remark}[On 6-dimensional subcritical Stein fillings] The proof of Theorem 4.8 in the paper \cite{DGV} of Ding-Geiges-van Koert together with the proof of our Theorem~\ref{intro:thm:main} tells that, for any fixed $n$ and $m$, our Stein handlebodies $X_{p,n}^{(m)}$ in Section~\ref{sec:main} are related to each other by certain moves of 2-handles (The same holds for $X_{p}^{(m)}$'s). According to the recent paper \cite{OV} of Ozbagci-van Koert, for 4-dimensional Stein handlebodies $X_1$ and $X_2$, the 6-dimensional subcritical Stein filling $X_1\times D^2$ is symplectically deformation equivalent to $X_2\times D^2$ with the contactomorphic boundary, if $X_1$ is related to $X_2$ by the above mentioned moves of handles. Hence, assuming their result, we see that infinitely many exotic 4-dimensional Stein fillings of a fixed contact 3-manifold can become the same 6-dimensional subcritical Stein filling (up to symplectic deformation) after taking the product with $D^2$. This seems to dash the hopes that smooth exotic structures of Stein $4$-manifolds might be detected by the complex structures of the $6$-manifolds obtained by the map $X\leadsto X\times D^{2}$.
\end{remark}

\end{document}